\numberwithin{equation}{section}
                        \theoremstyle{plain}
\newcommand\no[1]{}
\newtheorem{theorem}{Theorem}[section]
\newtheorem{lemma}[theorem]{Lemma}
\newtheorem{corollary}[theorem]{Corollary}
\newtheorem{proposition}[theorem]{Proposition}
\theoremstyle{definition}
\newtheorem{remark}[theorem]{Remark}
\def\BC{\mathbb C}
\def\BZ{\mathbb Z}
\def\BT{\mathbb T}
\def\CM{\mathcal M}
\def\CT{\mathcal T}
\def\la{\langle}
\def\ra{\rangle}
\DeclareMathOperator{\tr}{\mathrm tr}
\def\be { \begin{equation} }
\def\ee { \end{equation} }
\begin{document}

\title[Twisted Alexander polynomials with the adjoint action]
{Twisted Alexander polynomials with the adjoint action for some classes of knots}

\author[Anh T. Tran]{Anh T. Tran}
\address{Department of Mathematical Sciences, The University of Texas at Dallas, Richardson, TX 75080, USA}
\email{att140830@utdallas.edu}

\thanks{2010 \textit{Mathematics Subject Classification}.\/ 57M27.}
\thanks{{\it Key words and phrases.\/}
twisted Alexander polynomial, Reidemeister torsion, adjoint action, torus knot, twist knot.}

\begin{abstract}
We calculate the twisted Alexander polynomial with the adjoint action for torus knots and  twist knots. As consequences of these calculations, we obtain the formula for the nonabelian Reidemeister torsion of torus knots in \cite{Du} and a formula for the nonabelian Reidemeister torsion of twist knots that is better than the one in \cite{DHY}.
\end{abstract}

\maketitle

\section{Introduction}

The Alexander polynomial, the first polynomial knot invariant, was discovered by Alexander in 1928 \cite{Al}. It was later interpreted in terms of Reidemeister torsions by Milnor \cite{Mi} and  Turaev \cite{Tu}. The twisted Alexander polynomial, a generalization of the Alexander polynomial, was introduced by Lin \cite{Li} for knots in $S^3$ and by Wada \cite{Wa} for finitely presented groups. It was also interpreted in terms of  Reidemeister torsions by Kitano \cite{Ki} and Kirk-Livingston \cite{KL}. As a consequence of this interpretation, one can calculate certain kinds of Reidemeister torsions of a knot from a finite presentation of its knot group by applying Fox differential calculus. 

In this paper we consider the twisted Alexander polynomial with the adjoint action. The adjoint action, $\text{Ad}$, is the conjugation on the Lie algebra $sl_2(\BC)$ by the Lie group $SL_2(\BC)$. Suppose $K$ is a knot and $G_K$ its knot group. For each representation $\rho:G_K \to SL_2(\BC)$, the composition $\text{Ad} \circ \rho: G_K \to SL_3(\BC)$ is a representation and hence, following \cite{Wa}, one can define a rational function $\Delta^{\text{Ad} \circ \rho}_{K}(t)$, called the twisted Alexander polynomial with the adjoint action associated to $\rho$. The twisted Alexander polynomial $\Delta^{\text{Ad} \circ \rho}_{K}(t)$ has been calculated for just a few knots \cite{DY}. The purpose of this paper is to calculate $\Delta^{\text{Ad} \circ \rho}_{K}(t)$ for  torus knots and twist knots, see Theorems \ref{Ad} and \ref{twist}. 

The paper is organized as follows. In Section 2 we review some backgrounds on the twisted Alexander polynomial with the adjoint action and state  the main results, Theorems \ref{Ad} and \ref{twist}, about the formulas for the twisted Alexander polynomial with the adjoint action for  torus knots and twist knots. We give proofs of Theorems \ref{Ad} and \ref{twist} in Sections \ref{pf-torus} and \ref{pf-twist} respectively. 

\section{The twisted Alexander polynomial with the adjoint action}

\subsection{Twisted Alexander polynomials} Let $K$ be a knot and $G_K=\pi_1(S^3\backslash K)$ its knot group. We fix a presentation
$$
G_K=
\langle a_1,\ldots,a_\ell~|~r_1,\ldots,r_{\ell-1}\rangle.
$$
(This might not be a Wirtinger representation, but must be of deficiency one.) 

Let $f:G_K\to H_1(S^3\backslash K,\BZ)
\cong {\BZ}
=\langle t
\rangle$ 
be the abelianization homomorphism and $\rho:G_K\to SL_k(\BC)$ a representation. These maps naturally induce two ring homomorphisms $\widetilde{f}:{\BZ}[G_K]\rightarrow {\BZ}[t^{\pm1}]$ and $\widetilde{\rho}: {\BZ}[G_K] \rightarrow \CM(k,{\BC})$, 
where ${\BZ}[G_K]$ is the group ring of $G_K$ 
and 
$\CM(k,{\BC})$ is the matrix algebra of degree $k$ over ${\BC}$. 
Then 
$\widetilde{\rho}\otimes\widetilde{f}: {\BZ}[G_K]\to \CM\left(k,{\BC}[t^{\pm1}]\right)$ 
is a ring homomorphism. 
Let 
$F_\ell$ be the free group on 
generators $a_1,\ldots,a_\ell$ and 
$\Phi:{\BZ}[F_\ell]\to \CM\left(k,{\BC}[t^{\pm1}]\right)$
the composition of the surjective map 
${\BZ}[F_\ell]\to{\BZ}[G_K]$ 
induced by the presentation of $G_K$ 
and the map 
$\widetilde{\rho}\otimes\widetilde{f}:{\BZ}[G_K]\to \CM(k,{\BC}[t^{\pm1}])$. 

We consider the $(\ell-1)\times \ell$ matrix $M$ 
whose $(i,j)$-component is the $k\times k$ matrix 
$$
\Phi\left(\frac{\partial r_i}{\partial a_j}\right)
\in \CM\left(k,{\BC}[t^{\pm1}]\right),
$$
where 
$\frac{\partial}{\partial a}$ 
denotes the Fox derivative. 
For 
$1\leq j\leq \ell$, 
let $M_j$ be
the $(\ell-1)\times(\ell-1)$ matrix obtained from $M$ 
by removing the $j$th column. 
We regard $M_j$ as 
a $k(\ell-1)\times k(\ell-1)$ matrix with coefficients in 
${\BC}[t^{\pm1}]$. 
Then Wada's twisted Alexander polynomial 
of the knot $K$ associated to the representation $\rho:G_K\to SL_k({\BC})$ 
is defined to be the rational function 
$$
\Delta^{\rho}_{K}(t)
=\frac{\det M_j}{\det\Phi(1-a_j)}. 
$$
It is defined 
up to a factor $t^{km}~(m\in{\BZ})$, see \cite{Wa}. 

\subsection{The twisted Alexander polynomial with the adjoint action} The adjoint action, $\text{Ad}$, is the conjugation on the Lie algebra $sl_2(\BC)$ by the Lie group $SL_2(\BC)$. For $A \in SL_2(\BC)$ and $g \in sl_2(\BC)$ we have $\text{Ad}_A(g)=AgA^{-1}$. For each representation $\rho: G_K \to SL_2(\BC)$, the composition $\text{Ad} \circ \rho: G_K \to SL_3(\BC)$ is a representation and hence  
one can define the twisted Alexander polynomial $\Delta^{\text{Ad} \circ \rho}_{K}(t)$. We call $\Delta^{\text{Ad} \circ \rho}_{K}(t)$ the twisted Alexander polynomial with the adjoint action associated to $\rho$. 

In this paper we are interested in the twisted Alexander polynomial with the adjoint action associated to irreducible/non-abelian $SL_2(\BC)$-representations.

\begin{remark} It is known that $\Delta^{\text{Ad} \circ \rho}_{K}(t)$ coincides with the nonabelian Reidemeister torsion polynomial $\CT^{\rho}_{K}(t)$ \cite{Ki, KL}. As a consequence of this identification, one can calculate the nonabelian Reidemeister torsion $\BT^{\rho}_{K}$ for any longitude-regular $SL_2(\BC)$-representation $\rho$ from a finite presentation of the knot group of $K$, by applying Fox differential calculus and the following formula 
\begin{equation} \label{yamaguchi}
\BT^{\rho}_{K}=-\lim_{t \to 1} \frac{\CT^{\rho}_{K}(t)}{t-1}
\end{equation} 
in \cite{Ya}. We refer the reader to \cite{Po, Du, DHY, DY} for definitions of $\CT^{\rho}_{K}(t)$ and $\BT^{\rho}_{K}$.  
\end{remark}

\subsection{Torus knots}

Let $K$ be the $(p,q)$-torus knot.The standard presentation for the knot group of $K$ is $G_K=\la c,d \mid c^p=d^q\ra$. Choose a pair $(r,s)$ of natural numbers such that $ps-qr=1$. Then $\mu=c^{-r}d^s$ is a meridian of $K$. Note that the abelian homomorphism $f: G_K \to H_1(S^3 \setminus K;\BZ) \cong \BZ = \la t\ra$ sends $c$ and $d$ to $t^q$ and $t^p$ respectively.

A representation $\rho:G_K\to SL_2(\BC)$ is called irreducible if there is no proper invariant line in $\BC^2$ under the action of $\rho(G_K)$. Let $R^{\text{irr}}(G_K)$ be the set of irreducible $SL_2(\BC)$-representations of $G_K$ and $\hat{R}^{\text{irr}}$ the set of conjugacy classes of representations in $R^{\text{irr}}(G_K)$. According to \cite[Prop. 34]{Jo}, we have the following description of $\hat{R}^{\text{irr}}(G_K)$. See also \cite{Kl, Le} for similar results.

\begin{proposition} \label{Johnson}
$\hat{R}^{\emph{irr}}(G_K)$ consists of $(p-1)(q-1)/2$ components, which are determined by the following data, denoted by $\hat{R}^{\emph{irr}}_{k,l}(G_K)$:
\begin{enumerate}
\item $0<k<p$, $0<l<q$, and $k \equiv l \pmod{2}$.

\item For every $[\rho] \in \hat{R}^{\emph{irr}}_{k,l}(G_K)$, we have $\rho(c^p)=\rho(d^q)=(-1)^kI$. Moreover, $\tr \rho(c)=2\cos \big( \frac{\pi k}{p} \big)$, $\tr \rho(d)=2\cos \big( \frac{\pi l}{q} \big)$, and $\tr \rho(\mu) \not= 2\cos \pi \big( \frac{rk}{p} \pm \frac{sl}{q} \big)$.
\end{enumerate}
In particular, $\hat{R}^{\emph{irr}}_{k,l}(G_K)$ is parametrized by $\tr \rho(\mu)$ and has complex dimension one.
\end{proposition}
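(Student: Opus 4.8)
The plan is to exploit the central extension structure of the torus knot group. First I would observe that the element $z := c^p = d^q$ is central in $G_K$: being a power of $c$ it commutes with $c$, and being a power of $d$ it commutes with $d$, hence it commutes with all of $G_K = \la c,d\ra$. For any irreducible $\rho \in R^{\text{irr}}(G_K)$, Schur's lemma forces $\rho(z)$ to be a scalar matrix, and since $\det \rho(z) = 1$ we get $\rho(z) = \epsilon I$ with $\epsilon \in \{+1,-1\}$; thus $\rho(c^p) = \rho(d^q) = \epsilon I$. Next I would note that neither $\rho(c)$ nor $\rho(d)$ can be scalar: if, say, $\rho(c) = \pm I$, then $\rho(G_K)$ is generated by the scalar $\rho(c)$ together with $\rho(d)$, hence abelian and reducible. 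Therefore each of $\rho(c),\rho(d)$ is a non-scalar element of $SL_2(\BC)$ whose $p$-th (resp. $q$-th) power is $\epsilon I$; its minimal polynomial divides $X^p - \epsilon$ (resp. $X^q - \epsilon$), which has distinct roots, so both are diagonalizable with eigenvalue pairs $\{e^{\pm i\pi k/p}\}$ and $\{e^{\pm i\pi l/q}\}$ for integers $0 < k < p$ and $0 < l < q$ (the endpoints $k=0,p$ being exactly the scalar cases). This gives $\tr\rho(c) = 2\cos(\pi k/p)$, $\tr\rho(d) = 2\cos(\pi l/q)$, together with $\rho(c^p) = (-1)^k I$ and $\rho(d^q) = (-1)^l I$; the equality $\rho(c^p) = \rho(d^q)$ then forces $(-1)^k = (-1)^l$, i.e. $k \equiv l \pmod{2}$. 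This establishes condition (1) and the trace statements in (2).

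For the count, I would verify that the number of pairs $(k,l)$ with $0 < k < p$, $0 < l < q$, and $k \equiv l \pmod{2}$ equals $(p-1)(q-1)/2$. Writing $a,b$ for the number of odd and even integers in $\{1,\dots,p-1\}$ and $a',b'$ for the analogous counts in $\{1,\dots,q-1\}$, the desired number of parity-matching pairs is $aa' + bb'$, and an elementary manipulation shows $aa' + bb' = (p-1)(q-1)/2$ precisely when $(a-b)(a'-b') = 0$. Since $\gcd(p,q) = 1$, at least one of $p,q$ is odd, say $p$; then $p-1$ is even, whence $a = b$ and the claim follows.

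To see that each component $\hat R^{\text{irr}}_{k,l}(G_K)$ is one-dimensional and parametrized by $\tr\rho(\mu)$, I would use that the conjugacy class of an irreducible $SL_2(\BC)$-representation of the free group $\la c,d\ra$ is determined by the trace triple $(\tr\rho(c),\tr\rho(d),\tr\rho(cd))$. Having fixed $\tr\rho(c)$ and $\tr\rho(d)$ through $k$ and $l$, the single remaining coordinate $\tr\rho(cd)$ cuts out a one-parameter family, so $\dim_{\BC}\hat R^{\text{irr}}_{k,l}(G_K) = 1$; since the trace of the meridian $\mu = c^{-r}d^s$ is a non-constant polynomial in $\tr\rho(cd)$ once $\tr\rho(c),\tr\rho(d)$ are fixed, one can equally use $\tr\rho(\mu)$ as the parameter. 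Finally I would pin down the irreducibility constraint: a representation in this family is reducible iff $\rho(c)$ and $\rho(d)$ share an eigenvector, in which case $\rho$ is conjugate to an upper-triangular representation with diagonal entries drawn from $\{e^{\pm i\pi k/p}\}$ and $\{e^{\pm i\pi l/q}\}$; then $\rho(\mu) = \rho(c)^{-r}\rho(d)^s$ is triangular, and reading off its diagonal according to the two possible pairings of the eigenvalues gives exactly $\tr\rho(\mu) = 2\cos\pi\big(\tfrac{rk}{p} \pm \tfrac{sl}{q}\big)$. Excluding these two values removes precisely the reducible locus, yielding the last part of condition (2).

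The main obstacle I anticipate is this last step: carefully proving that the reducible representations in a fixed component are \emph{exactly} those with $\tr\rho(\mu) \in \{2\cos\pi(\tfrac{rk}{p} \pm \tfrac{sl}{q})\}$, and that $\tr\rho(\mu)$ genuinely serves as a global coordinate on the component (i.e. that the passage from $\tr\rho(cd)$ to $\tr\rho(\mu)$ is a bijection, not merely finite-to-one). This is where the relation $ps - qr = 1$ must interact with the eigenvalue data of $\rho(c)$ and $\rho(d)$, and is the point at which Johnson's explicit parametrization does the real work.
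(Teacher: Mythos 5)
The paper does not prove this proposition at all: it is quoted verbatim from Johnson's unpublished lecture notes \cite{Jo} and used as a black box, so there is no argument in the text to compare yours against. That said, your sketch is the standard proof and is essentially correct. The central-element/Schur's-lemma step, the non-scalarity of $\rho(c)$ and $\rho(d)$, the diagonalizability via the separable polynomial $X^p-\epsilon$, the parity constraint $k\equiv l \pmod 2$, and the counting argument using $\gcd(p,q)=1$ are all right. One small point you leave implicit but should state: any irreducible representation of the free group $\la c,d\ra$ with $\tr\rho(c)=2\cos(\pi k/p)\neq\pm2$ and $\tr\rho(d)=2\cos(\pi l/q)\neq\pm2$ automatically satisfies $\rho(c)^p=\rho(d)^q=(-1)^kI$, which is what guarantees that the entire one-parameter family in the free-group character variety descends to $G_K$, rather than just the representations you started from.

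The one genuine gap is the step you yourself flag at the end: that $\tr\rho(\mu)$ is a global affine coordinate on the component and that the reducible locus is exactly the two stated values. This does close, and here is how. With $x=\tr\rho(c)$, $y=\tr\rho(d)$ fixed, the component is the affine line with coordinate $z=\tr\rho(cd)$, and the trace identity $\tr(A^{-r}B^{s})=S_{r-1}(\tr A)\,S_{s-1}(\tr B)\,\tr(A^{-1}B)+(\text{terms independent of }\tr(A^{-1}B))$, where $S_m$ denotes the Chebyshev-like polynomials, shows that $\tr\rho(\mu)$ is an affine-linear function of $z$ with slope $\frac{\sin(\pi rk/p)\sin(\pi sl/q)}{\sin(\pi k/p)\sin(\pi l/q)}$. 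Since $ps-qr=1$ forces $\gcd(r,p)=\gcd(s,q)=1$ and $0<k<p$, $0<l<q$, this slope is nonzero, so $z\mapsto\tr\rho(\mu)$ is a bijection of $\BC$ and $\tr\rho(\mu)$ is a legitimate parameter. For the reducible locus, the characters with $\tr\rho([c,d])=2$ are exactly the two roots in $z$ of $x^2+y^2+z^2-xyz-4=0$, namely $z=\lambda\nu+\lambda^{-1}\nu^{-1}$ and $z=\lambda\nu^{-1}+\lambda^{-1}\nu$ with $\lambda=e^{i\pi k/p}$, $\nu=e^{i\pi l/q}$, and under the affine map above these go precisely to $2\cos\pi\bigl(\frac{rk}{p}\mp\frac{sl}{q}\bigr)$, as you computed from the triangular form. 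With that supplied, your proof is complete.
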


Then we have the following.

\begin{theorem} \label{Ad}
Let $K$ be the $(p,q)$-torus knot. Suppose $\rho: G_K \to SL_2(\BC)$ is a representation such that $[\rho] \in \hat{R}^{\emph{irr}}_{k,l}$. Then 
$$\Delta^{\emph{Ad} \circ \rho}_{K}(t)=\frac{(t^{pq}-1)^3}{(t^p-1)(t^q-1)(t^{2q}-2(\cos\frac{2\pi k}{p})t^q+1)(t^{2p}-2(\cos\frac{2\pi l}{q})t^p+1)}.$$
\end{theorem}

It is known that for a torus knot every irreducible $SL_2(\BC)$-representation is longitude-regular, and hence one can define the non-abelian Reidemeister torsion $\BT^{\rho}_{K}$, see \cite{Po, Du}. Theorem \ref{Ad} and equation \eqref{yamaguchi} imply the following.

\begin{corollary} [\cite{Du}] \label{R-torsion}
Let $K$ be the $(p,q)$-torus knot. Suppose $\rho: G_K \to SL_2(\BC)$ is a representation such that $[\rho] \in \hat{R}^{\emph{irr}}_{k,l}$. Then
$$\BT^{\rho}_{K}=-\frac{p^2q^2}{16\sin ^2\big( \frac{\pi k}{p}\big)\sin ^2\big( \frac{\pi l}{q}\big)}.$$
\end{corollary}

\subsection{Twist knots}

Let $J(k,l)$ be the link in Figure 1, where $k,l$ denote 
the numbers of half twists in the boxes. Positive (resp. negative) numbers correspond 
to right-handed (resp. left-handed) twists. 
Note that $J(k,l)$ is a knot if and only if $kl$ is even, and is the trivial knot if $kl=0$. 
Furthermore, $J(k,l)\cong J(l,k)$ and 
$J(-k,-l)$ is the mirror image of $J(k,l)$. Hence we only consider $J(k,2n)$ for $k>0$ and $|n|>0$. When $k=2$, $J(2,2n)$ is the twist knot. For more information about $J(k,l)$, see \cite{HS}.

\begin{figure}[th]
\centerline{\psfig{file=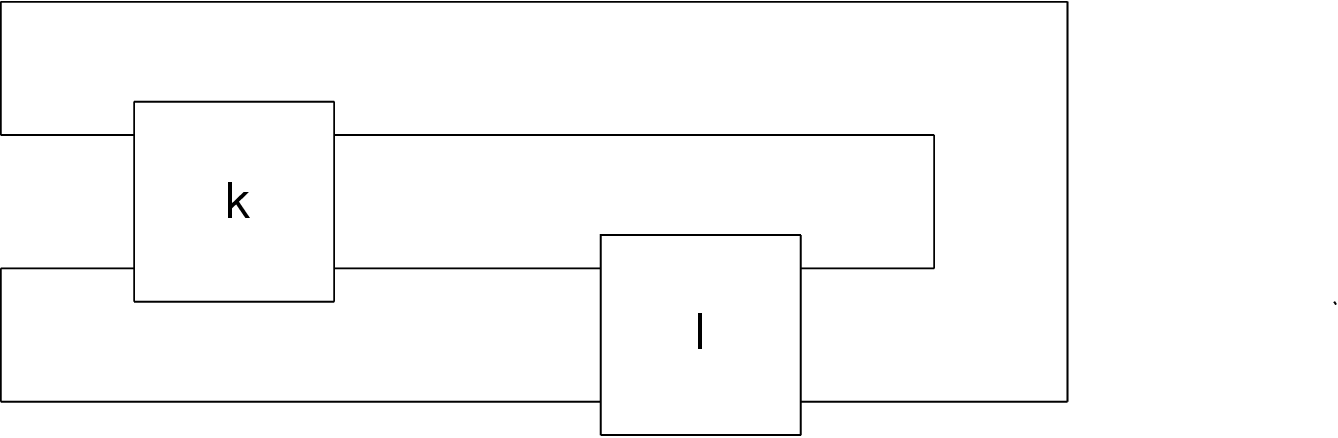,width=4in}}
\vspace*{8pt}
\caption{The link $J(k,l)$. }
\end{figure} 

Consider $K=J(2,2n)$. The knot group of $K$ is $G_K= \la a, b \mid w^na=bw^n \ra$ where $a,b$ are meridians and $w=ba^{-1}b^{-1}a$, see \cite{HS}. A representation $\rho:G_K\to SL_2(\BC)$ is called nonabelian if 
$\rho(G_K)$ is a nonabelian subgroup of $SL_2(\BC)$. 

Let $x=\tr \rho(a)=\tr \rho(b)$ and $y=\tr \rho(ab^{-1})$. Then we have the following.

\begin{theorem}  \label{twist}
Let $K$ be the twist knot $J(2,2n)$. Suppose $\rho: G_K \to SL_2(\BC)$ is a nonabelian representation. Then 
\begin{eqnarray*}
\Delta^{\text{Ad} \circ \rho}_{K}(t) &=& \frac{t-1}{(y+2-x^2)(y^2-yx^2+x^2)} \\
&&\times \left( nt^2 + \frac{(2n-1)y^2+yx^2-2nx^2(x^2-2)}{y^2-yx^2+2x^2} t+n \right).
\end{eqnarray*}
\end{theorem}

Theorem \ref{twist} and equation \eqref{yamaguchi} imply the following.

\begin{corollary} \label{coro}
Let $K$ be the twist knot $J(2,2n)$. Suppose $\rho: G_K \to SL_2(\BC)$ is a longitude-regular representation. Then 
$$\BT^{\rho}_{K}=\frac{-1}{(y+2-x^2)(y^2-yx^2+x^2)} \left(  \frac{(2n-1)y^2+yx^2-2nx^2(x^2-2)}{y^2-yx^2+2x^2} + 2n \right).$$
\end{corollary}

\begin{remark}
The nonabelian Reidemeister torsion for twist knots was calculated in \cite{DHY}. However, the formula in Corollary \ref{coro} is better.
\end{remark}

\section{Proof of Theorem \ref{Ad}} \label{pf-torus}

Recall that $K$ is the $(p,q)$-torus knot and $G_K=\la c,d \mid c^p=d^q \ra$ the standard presentation of its knot group. Suppose $\rho: G_K \to SL_2(\BC)$ is a representation such that $[\rho] \in \hat{R}^{\emph{irr}}_{k,l}$. Proposition \ref{Johnson} implies that the matrices $\rho(c)$ and $\rho(d)$ are respectively conjugate to $$\left[ \begin{array}{cc}
e^{i\frac{\pi k}{p}} & 0\\
0 & e^{-i\frac{\pi k}{p}} \end{array} \right] \quad \text{and} \quad\left[ \begin{array}{cc}
e^{i\frac{\pi l}{q}} & 0\\
0 & e^{-i\frac{\pi l}{q}} \end{array} \right].$$ 
By conjugation if necessary, we may assume that $\rho(c)=\left[ \begin{array}{cc}
\alpha & 0\\
0 & \alpha^{-1} \end{array} \right]$ and $\rho(d)$ is conjugate to $\left[ \begin{array}{cc}
\beta & 0\\
0 & \beta^{-1} \end{array} \right]$ where $\alpha=e^{i\frac{\pi k}{p}}$ and $\beta=e^{i\frac{\pi l}{q}}$. 

Let $\{E,H,F\}$ be the following usual $\BC$-basis of $sl_2(\BC)$:
$$E=\left[ \begin{array}{cc}
0 & 1\\
0 & 0 \end{array} \right],~H=\left[ \begin{array}{cc}
1 & 0\\
0 & -1 \end{array} \right],~F=\left[ \begin{array}{cc}
0 & 0\\
1 & 0 \end{array} \right].$$
Then the adjoint actions of $c$ and $d$ in the basis $\{E,H,F\}$ of $sl_2(\BC)$ are respectively given by the matrices $C=\text{Ad}_{\rho(c)}$ and $D=\text{Ad}_{\rho(d)}$, where $C=\text{diag}(\alpha^2, 1, \alpha^{-2})$ and $D$ is conjugate to $\text{diag}(\beta^2, 1, \beta^{-2})$.

 We have $\frac{\partial}{\partial c}c^pd^{-q}=1+c+ \cdots +c^{p-1}$, and hence
\begin{eqnarray*}
\Delta^{\text{Ad} \circ \rho}_{K}(t) &=& \frac{\det \Phi (\frac{\partial}{\partial c}c^pd^{-q})}{\det \Phi(d-1)}\\
&=& \frac{\det (I+t^q C +t^{2q} C^2+ \cdots + t^{(p-1)q}C^{p-1})}{\det (t^p D -I)}\\
&=& \frac{(1+\alpha^2 t^q + \cdots + \alpha^{2(p-1)} t^{(p-1)q})(1+\alpha^{-2} t^q + \cdots + \alpha^{-2(p-1)} t^{(p-1)q})}{t^{2p}-(\beta^2+\beta^{-2})t^p+1}\\
&& \times \frac{1+t^q + \cdots + t^{(p-1)q}}{t^p-1}.
\end{eqnarray*} 
Since $\alpha^{2p}=1$, we have $$1+\alpha^2 t^q + \cdots + \alpha^{2(p-1)} t^{(p-1)q}=\frac{(\alpha^2 t^q)^p-1}{\alpha^2 t^q-1}=\frac{t^{pq}-1}{\alpha^2 t^q-1}.$$
Similarly,
$$1+\alpha^{-2} t^q + \cdots + \alpha^{-2(p-1)} t^{(p-1)q}=\frac {t^{pq}-1}{\alpha^{-2} t^q-1}.$$
Hence 
\begin{eqnarray*}
\Delta^{\text{Ad} \circ \rho}_{K}(t) &=& \frac{(t^{pq}-1)^3}{(t^p-1)(t^q-1)(t^{2q}-(\alpha^2+\alpha^{-2})t^q+1)(t^{2p}-(\beta^2+\beta^{-2})t^p+1)}.
\end{eqnarray*} 
Theorem \ref{Ad} follows, since $\alpha^2+\alpha^{-2}=2\cos(\frac{2\pi k}{p})$ and $\beta^2+\beta^{-2}=2\cos(\frac{2\pi l}{q})$.

\begin{remark}
The above proof is similar to that of \cite[Thm. 1.1]{KM}. 
\end{remark}

\section{Proof of Theorem \ref{twist}} \label{pf-twist}

Recall that $K$ is the twist knot $J(2,2n)$ and $G_K= \la a, b \mid w^na=bw^n \ra$ its knot group, where $a,b$ are meridians and $w=ba^{-1}b^{-1}a$. 

\subsection{Nonabelian representations} 

Suppose $\rho: G_K \to SL_2(\BC)$ is a nonabelian representation. Taking conjugation if necessary, we can assume that $\rho$ 
has the form
$$
\rho(a)=\left[ \begin{array}{cc}
\sqrt{s} & \frac{1}{\sqrt{s}}\\
0 & \frac{1}{\sqrt{s}} \end{array} \right] \quad \text{and} \quad \rho(b)=\left[ \begin{array}{cc}
\sqrt{s} & 0\\
-\sqrt{s} \, u & \frac{1}{\sqrt{s}} \end{array} \right]
$$
where $(s,u) \in \BC^* \times \BC$ satisfies the Riley equation $\phi_K(s,u)=0$, see \cite{Ri}. Note that $x=\tr \rho(a)=\sqrt{s}+\frac{1}{\sqrt{s}}$ and $y=\tr \rho(ab^{-1})=u+2$.

Let $\gamma=\tr\rho(w)=2 + 2 u - \frac{u}{s} - s u + u^2$. By \cite{DHY} we have $$\phi_K(s,u)=(s+s^{-1}-1-u)\frac{\xi_+^n - \xi_-^n}{\xi_+ - \xi_-}-\frac{\xi_+^{n-1} - \xi_-^{n-1}}{\xi_+ - \xi_-}$$ where $\xi_{\pm}$ are eigenvalues of $\rho(w)$, i.e. $\xi_+\xi_-=1$ and $\xi_+ + \xi_- =\gamma$. 

Let
$$X=\frac{\xi_+^n - \xi_-^n}{\xi_+ - \xi_-},\quad Y=\frac{\xi_+^{n-1} - \xi_-^{n-1}}{\xi_+ - \xi_-}.$$
Since $\phi_K(s,u)=0$, we have $Y=(s+s^{-1}-1-u)X$. Moreover, it is easy to see that $X^2-\gamma XY+Y^2=1$. Hence we have the following.

\begin{lemma} \label{X}
 $$X^2=\frac{1}{1-(s+s^{-1}-1-u)\gamma+(s+s^{-1}-1-u)^2}.$$
\end{lemma}

\subsection{Adjoint action matrices}

Recall that $\{E,H,F\}$ is the following usual $\BC$-basis of $sl_2(\BC)$:
$$E=\left[ \begin{array}{cc}
0 & 1\\
0 & 0 \end{array} \right],~H=\left[ \begin{array}{cc}
1 & 0\\
0 & -1 \end{array} \right],~F=\left[ \begin{array}{cc}
0 & 0\\
1 & 0 \end{array} \right].$$
The adjoint actions of $a$ and $b$ in the basis $\{E,H,F\}$ of $sl_2(\BC)$ are given by the following matrices:
$$A=\text{Ad}_{\rho(a)}=\left[ \begin{array}{ccc}
s &-2 & -s^{-1}\\
0 & 1 & s^{-1}\\
0 & 0 & s^{-1}\end{array} \right], 
\quad B=\text{Ad}_{\rho(b)}=\left[ \begin{array}{ccc}
s & 0 & 0\\
su & 1 & 0\\
-su^2 & -2u & s^{-1}\end{array} \right].$$

Let $W=\text{Ad}_{\rho(w)}$. Note that the $SL_2(\BC)$-matrix $\rho(w)$ can be diagonalized by
$$Q=\left[ \begin{array}{cc}
u+1-s^{-1} & u+1-s^{-1}\\
1-su-\xi_+ & 1-su-\xi_- \end{array} \right].$$
Explicitly, $Q^{-1}\rho(w)Q$ is the diagonal matrix $\text{diag}(\xi_+, \xi_-)$. 

Let $\alpha=1-su-\xi_+,~\beta=1-su-\xi_-$ and $\delta=u+1-s^{-1}$. With respect to the basis $\{E,H,F\}$ of $sl_2(\BC)$, the matrix of the adjoint action of $Q$ is
$$P=\text{Ad}_Q=\frac{1}{\alpha-\beta}\left[ \begin{array}{ccc}
-\delta & 2\delta & \delta\\
\alpha & -(\alpha+\beta) & -\beta\\
\alpha^2/\delta & -2\alpha\beta/\delta & -\beta^2/\delta\end{array} \right].$$
Then $P^{-1}WP$ is the diagonal matrix $\text{diag}(\xi_+^2, 1, \xi_-^2)$.

Let $\Omega=I+W^{-1}+ \cdots + W^{-(n-1)}$. We have the following.

\begin{proposition} \label{O}
$$\Omega= \frac{1}{s^2u (1 - 2 s + s^2 - s u) (-4 s + u - 2 s u + s^2 u - s u^2)} 
\left[ 
\begin{array}{ccc}
\omega_{11} & \omega_{12} & \omega_{13}\\
\omega_{21} & \omega_{22} & \omega_{23}\\
\omega_{31} & \omega_{32} & \omega_{33} 
\end{array}
\right]$$
where
\begin{eqnarray*}
\omega_{11} &=& s^2 u \big\{ (1 - 2 s + s^2 - s u) (2 - 4 s + 2 s^2 + u - 6 s u + s^2 u - 4 s^3 u \\
&& - \, s u^2 + 3 s^2 u^2 - s^3 u^2 + s^4 u^2 - s^3 u^3) X^2 -2 n s (-1 + s + s u)^2\big\},\\
\omega_{12} &=& -2 s u (-1 + s + s u) \big\{ (1 - 2 s + s^2 - s u) \\
   && \times (-1 - 3 s^2 + 2 s u - s^2 u + s^3 u - 
   s^2 u^2) X^2 - n s (-1 + 2 s + s^2 + s u) \big\},\\
\omega_{13} &=& -(-1 + s + s u)^2 \big\{ (1 - 2 s + s^2 - s u) \\
            && \times (-2 s + u - s u + s^2 u - s u^2) X^2-2 n s^2 \big\},\\
\omega_{21} &=& s^2 u^2 (-1 + s + s u) \big\{ (1 - 2 s + s^2 - s u)  \\
   && \times (-1 - 3 s^2 + 2 s u - s^2 u + s^3 u - 
   s^2 u^2) X^2 -  n s (-1 + 2 s + s^2 + s u) \big\},\\
\omega_{22} &=& -s u \big\{ 2 (1 - 2 s + s^2 - s u) (-1 + s + s u)^2 (-2 s + u - s u + s^2 u - 
   s u^2) X^2 \\
   && - \, n s u (-1 + 2 s + s^2 + s u)^2 \big\},\\
\omega_{23} &=& -u (-1 + s + s u) \big\{  (1 - 2 s + s^2 - s u) (-1 + s + s u) \\
&&(-3 s + s^2 + u - 2 s u + s^2 u -
    s u^2) X^2 -n s^2 (-1 + 2 s + s^2 + s u)  \big\},\\
\omega_{31} &=& -s^2 u^2 (-1 + s + s u)^2 \big\{ (1 - 2 s + s^2 - s u) \\
     && \times (-2 s + u - s u + s^2 u - s u^2) X^2 -2 n s^2  \big\},\\
\omega_{32} &=& 2 s u^2 (-1 + s + s u)  \big\{ (1 - 2 s + s^2 - s u) (-1 + s + s u) \\
&&(-3 s + s^2 + u - 2 s u + s^2 u -
    s u^2) X^2 -n s^2 (-1 + 2 s + s^2 + s u) \big\},\\
\omega_{33} &=& u (-1 + s + s u) \big\{ (1 - 2 s + s^2 - s u) (-2 s^2 + 2 s^3 + 4 s u - 9 s^2 u + 3 s^3 u \\
&&- \, 
   u^2 + 4 s u^2 - 9 s^2 u^2 + 3 s^3 u^2 + 2 s u^3 - 4 s^2 u^3 + 
   s^3 u^3 - s^2 u^4) X^2\\
   && - \, 2 n s^3 (-1 + s + s u) \big\}.
\end{eqnarray*}
\end{proposition}

\begin{proof} Let 
\begin{eqnarray*}
d_1 &=& \xi_-^{n-1}+\xi_+^{n-1} = 2X-\gamma Y,\\ 
d_2 &=& \alpha \, \xi_-^{n-1}+\beta \, \xi_+^{n-1}=(1-su)(2X-\gamma Y)-\gamma X+(\gamma^2-2)Y,\\
d_3 &=& \alpha \, \xi_+^{n-1}+\beta \, \xi_-^{n-1}= (1-su)(2X-\gamma Y)-\gamma X+2Y,\\
d_4 &=& \alpha^2\xi_-^{n-1}+\beta^2\xi_+^{n-1}=(1-su)^2(2X-\gamma Y)-2(1-su)(\gamma X-(\gamma^2-2)Y)\\
    && \qquad \qquad \qquad \qquad \qquad + \, (\gamma^2-2)X-\gamma(\gamma^2-3)Y,\\
d_5 &=& \alpha^2\xi_+^{n-1}+\beta^2\xi_-^{n-1}=(1-su)^2(2X-\gamma Y)-2(1-su)(\gamma X-2Y)\\
    && \qquad \qquad \qquad \qquad \qquad + \, (\gamma^2-2)X-\gamma Y. 
\end{eqnarray*}
Since $P^{-1}WP$ is the diagonal matrix $\text{diag}(\xi_+^2, 1, \xi_-^2)$, we have
\begin{eqnarray*}
\Omega &=& P \, \text{diag}(\xi_-^{n-1}X,n,\xi_+^{n-1}X) \, P^{-1} \\
&=& \frac{1}{(\alpha-\beta)^2} \left[ \begin{array}{ccc}
-2\alpha\beta n+d_5X & 2\delta(-(\alpha+\beta)n+d_3X) & \delta^2(2n-d_1X)\\
\frac{\alpha\beta}{\delta}((\alpha+\beta)n-d_3X) & (\alpha+\beta)^2n-2\alpha\beta d_1X & \delta(-(\alpha+\beta)n+d_2X) \\
(\frac{\alpha\beta}{\delta})^2(2n-d_1X) & \frac{2\alpha\beta}{\delta}((\alpha+\beta)n-d_2X) & -2\alpha\beta n+d_4X \end{array} \right] .
\end{eqnarray*} 
Proposition \ref{O} follows from the above equation and $Y=(s+s^{-1}-1-u)X$.
\end{proof}

\subsection{Proof of Theorem \ref{twist}}

We focus on the case $n>0$. The case $n<0$ is similar. We have $$\frac{\partial}{\partial a} w^naw^{-n}b^{-1}=w^n(1+(1-a)(1+w^{-1}+ \cdots + w^{-(n-1)})(a^{-1}-a^{-1}b))$$ and hence
$$
\Delta^{\text{Ad} \circ \rho}_{K}(t) = \frac{\det \Phi(\frac{\partial}{\partial a} w^naw^{-n}b^{-1})}{\det \Phi(b-1)} 
= \frac{\det(I+(I-tA)\Omega(t^{-1}A^{-1}-A^{-1}B))}{\det (tB-I)}.
$$
Then, by  Lemma \ref{X} and Proposition \ref{O}, we have
\begin{eqnarray*}
\Delta^{\text{Ad} \circ \rho}_{K}(t) &=& \frac{s (t-1)}{(-1 + s - u) (1 - 2 s + s^2 - s u) (-1 + s + s u) (-4 s + u - 2 s u + 
   s^2 u - s u^2) t^3} \\
&& \times  \big\{ n s (-4 s + u - 2 s u + s^2 u - s u^2)t^2+ (2 n - 2 s + 4 n s - 4 n s^2 \\
&& - \, 2 s^3 + 4 n s^3 + 2 n s^4 - s u + 
  2 s^2 u - 8 n s^2 u - s^3 u + s^2 u^2 - 2 n s^2 u^2)t\\
  && + \, n s (-4 s + u - 2 s u + s^2 u - s u^2)\big\}\\
&=&  \frac{t-1}{(-2 + \frac{1}{s} + s - u)(-2 + \frac{1}{s} + s - 2 u + \frac{u}{s} + s u - u^2)  t^3}\\
&& \times \big\{ nt^2+ \frac{1}{-4 - 2 u + \frac{u}{s} + s u - u^2} \big(-4 n + \frac{2n}{s^2} - \frac{2}{s} + \frac{4n}{s} - 2 s + 4 n s \\
&&+\, 2 n s^2 + 2 u - 
 8 n u - \frac{u}{s} - s u + u^2 - 2 n u^2 \big)t+n \big\}.
\end{eqnarray*}
By substituting $s+\frac{1}{s}=x^2-2$, $s^2+\frac{1}{s^2}=x^4-4x^2+2$ and $u=y-2$ into the above equation, we get Theorem \ref{twist}.


\begin{thebibliography}{99999}

\bibitem[Al]{Al} J. Alexander, \textit{Topological invariants of knots and links}, Trans. Amer. Math. Soc. \textbf{30} (1928), no. 2, 275--306.

\bibitem[DHY]{DHY} J. Dubois, V. Huynh and Y. Yamaguchi, \textit{Non-abelian Reidemeister torsion for twist knots}, J. Knot Theory Ramifications \text{18} (2009), no. 3, 303--341.

\bibitem[DY]{DY} J. Dubois and Y. Yamaguchi, \textit{Twisted Alexander invariant and non-abelian Reidemeister torsion for hyperbolic three-dimensional manifolds with cusps}, arXiv:0906.1500.

\bibitem[Du]{Du} J. Dubois, \textit{Non-abelian twisted Reidemeister torsion for fibered knots}, Canad. Math. Bull. \textbf{49} (2006), no. 1, 55--71.

\bibitem[HS]{HS} J. Hoste and P. Shanahan, {\em A formula for the $A$-polynomial of twist knots},  J. Knot Theory Ramifications  {\bf 14}  (2005),  no. 1, 91--100.

\bibitem[Jo]{Jo} D. Johnson, \textit{A geometric form of Casson's invariant and its connection to Reidemeister torsion}, unpublished lecture notes.

\bibitem[Ki]{Ki} T. Kitano, \textit{Twisted Alexander polynomial and Reidemeister torsion},  Pacific J. Math. \textbf{174} (1996), no. 2, 431--442.

\bibitem[Kl]{Kl} E. Klassen, \textit{Representations of knot groups in $SU(2)$}, Trans. Amer. Math. Soc. \textbf{326} (1991) 795--828.

\bibitem[KL]{KL} P. Kirk and C. Livingston, \textit{Twisted Alexander invariants, Reidemeister torsion, and Casson-Gordon invariants} Topology \textbf{38} (1999), no. 3, 635--661.

\bibitem[KM]{KM} T. Kitano and T. Morifuji, \textit{Twisted Alexander polynomials for $SL_2(\BC)$-irreducible representations of torus knots}, Ann. Sc. Norm. Super. Pisa Cl. Sci. (5) \textbf{11} (2012), 395--406.

\bibitem[Le]{Le} T. Le, \textit{Varieties of representations and their subvarieties of cohomology jumps for certain knot groups}, Russian Acad. Sci. Sb. Math. \textbf{78} (1994) 187--209.

\bibitem[Li]{Li} X. S. Lin, \textit{Representations of knot groups and twisted Alexander polynomials}, Acta Math. Sin. (Engl. Ser.) \textbf{17} (2001), 361--380.

\bibitem[Mi]{Mi} J. Milnor, \textit{Whitehead torsion}, Bull. Amer. Math. Soc. \textbf{72} (1966) 358--426.

\bibitem[Po]{Po} J. Porti, \textit{Torsion de Reidemeister pour les varietes hyperboliques}, vol. 128, Mem. Amer. Math. Soc., no. 612, AMS, 1997.

\bibitem[Ri]{Ri} R. Riley, {\em Nonabelian representations of $2$-bridge knot groups}, Quart. J. Math. Oxford Ser. (2) \textbf{35} (1984), no. 138, 191--208.

\bibitem[Tu]{Tu} V. Turaev, \textit{Introduction to combinatorial torsions}, Lectures in Mathematics ETH Zurich Birkhauser Verlag, Basel (2001) viii+123 pp.

\bibitem[Ya]{Ya} Y. Yamaguchi, {\em A relationship between the non-acyclic Reidemeister torsion and a zero of the acyclic Reidemeister torsion}, Ann. Inst. Fourier (Grenoble) \textbf{58} (2008), no. 1, 337--362.

\bibitem[Wa]{Wa} M. Wada, \textit{Twisted Alexander polynomial for finitely presentable groups}, Topology \textbf{33} (1994), 241--256.

\end{thebibliography}
\end{document}